\newcommand{\R}{\mathbb{R}}
\newcommand{\Z}{\mathbb{Z}}
\newcommand{\Oo}{\mathcal{O}}
\newcommand{\pr}[1]{\mathbb{P}\left( #1 \right)}
\newcommand{\supp}{\mathrm{supp}}
\newtheorem{theorem}{Theorem}[section]
\newtheorem{lemma}[theorem]{Lemma}
\newtheorem{corollary}[theorem]{Corollary}
\newtheorem{proposition}[theorem]{Proposition}
\theoremstyle{definition}
\newtheorem{remark}{Remark}[section]
\newtheorem{assumption}{Assumption}
\author{Michał Kotowski \and B{\'a}lint Vir{\'a}g}
\title{Non-Liouville groups with return probability exponent at most $1/2$}
\begin{document}
\maketitle

\begin{abstract}
We construct a finitely generated group $G$ without the Liouville property such that the return probability of a random walk satisfies $p_{2n}(e,e) \gtrsim e^{-n^{1/2 + o(1)}}$. Recent results suggest that $1/2$ is indeed the smallest possible return probability exponent for non-Liouville groups. Our construction is based on permutational wreath products over tree-like Schreier graphs and the analysis of large deviations of inverted orbits on such graphs. 
\end{abstract}

\section{Introduction}

One of the basic topics of study in probability and group theory is the behavior of random walks on Cayley graphs of finitely generated groups. Among the interesting parameters of a random walk is the {\it return probability} $p_{2n}(e,e)$. There are examples for which it decays polynomially in $n$ (like $\Z^d$ or, more generally, groups of polynomial volume growth) or exponentially (which is the case exactly for nonamenable groups). Other, intermediate types of behavior are also possible, which motivates the study of possible exponents $\gamma$ for which $p_{2n}(e,e) \approx e^{-n^{\gamma}}$. For example, every group of exponential growth must have $\gamma \geq 1/3$ (see \cite{varopoulos}).

Another important parameter is the {\it speed} (or drift) of the random walk. The average distance $\mathbb{E} d(X_0, X_n)$ of the random walk from the origin after $n$ steps may grow linearly with $n$, in which case we say that the random walk has {\it positive speed}, or slower, in which case we say that the random walk has {\it zero speed}. It is thus interesting to ask what exponents $\beta < 1$ such that $\mathbb{E} d(X_0, X_n) \approx n^{\beta}$ are possible. For example, it is known that for every finitely generated group we have $\beta \geq 1/2$ \cite{lee-peres}, but generally computing speed seems more difficult than computing return probabilities. Note that the exponents $\gamma$ and $\beta$ as above need not exist (the return probability and average distance from the origin can oscillate at different scales, see \cite{brieussel}), so in general one should speak about $\liminf$ and $\limsup$ exponents.

Speed of the random walk is closely related to the properties of harmonic functions on groups. Recall that a group has the {\it Liouville property} (with respect to some generating set) if every bounded harmonic function on its Cayley graph is constant. A classical result (see for example discussion in \cite[Chapter 9]{gabor}) says that for groups (though not for general transitive graphs) having positive speed is equivalent to non-Liouville property. Note, however, that it is not known if this property is independent of the generating set (or, more generally, the step distribution of the random walk), which is in contrast to return probabilities, whose decay rate is stable under quasi-isometries (\cite{pittet-saloff-coste}).

The motivation for this paper is the following remarkable theorem (which is a corollary of a more general result from \cite{laurent-tianyi}): if the return probability satisfies $p_{2n}(e,e) \geq K e^{-cn^{\gamma}}$ for $\gamma < 1/2$ (and some constants $K, c >0$), then the group has the Liouville property \footnote{This theorem was first announced in \cite{gournay}, but the proof there relies on an assumption about off-diagonal heat kernel bounds which has not been proved to hold except for groups of polynomial growth.}. In particular, it has zero speed for every generating set (since, as mentioned above, the property $\gamma < 1/2$ is invariant under quasi-isometries). This is the first known general result connecting return probabilities with speed and showing quasi-isometry invariance of the Liouville property for a broad class of groups. For more discussion of possible relationships between these exponents (and also other quantities like entropy or volume growth) and numerous examples, see (\cite[Section 4]{gournay}).

This result does not characterize the Liouville property, since there exist groups with $\gamma$ arbitrarily close to $1$ which are still Liouville \cite{erschler-bartholdi-intermediate}. In the other direction, it is natural to ask whether the value $1/2$ in the theorem cited above can be improved, i.e. whether there exist groups with $\gamma$ arbitrarily close to $1/2$ from above (or even equal to $1/2$) which are non-Liouville. Several examples of groups with $\gamma = 1/2$ are known (\cite{saloff-coste}), but they all have the Liouville property.

The main result of our paper is the construction of a finitely generated group which has $\gamma \leq 1/2$, but at the same time is non-Liouville. More precisely, consider the {\it upper return probability exponent}:
\[
\overline{\gamma} = \limsup\limits_{n \to \infty} \frac{ \log |\log p_{2n}(e,e)|}{\log n}
\]
We will prove the following theorem:

\begin{theorem}\label{th:main}
There exists a finitely generated group $G$ and a symmetric finitely supported random walk $\mu$ on $G$ such that $G$ is non-Liouville with respect to $\mu$ and the upper return probability exponent satisfies $\overline{\gamma} \leq 1/2$.
\end{theorem}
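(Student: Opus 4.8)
The plan is to construct $G$ as a permutational wreath product and control two competing quantities simultaneously: the return probability (which I want to decay no faster than $e^{-n^{1/2+o(1)}}$) and the speed (which I need to be positive to guarantee non-Liouville behavior). Let me think about the architecture.

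The abstract tells me the construction is "permutational wreath products over tree-like Schreier graphs" and the key analytic tool is "large deviations of inverted orbits." Let me reconstruct why this is the natural approach.

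For a lamplighter-type / wreath product group, there's a classical formula relating the return probability to the "inverted orbit" of the base random walk. If I take a wreath product $A \wr_X H$ where $H$ acts on a set $X$ via a Schreier graph, and $A$ is a finite lamp group, then a random walk returns to the identity at time $2n$ roughly when (a) the base walk on $H$ returns, AND (b) all the lamps are reset to the identity. The number of lamps that get touched is the size of the *inverted orbit* $O_n = \{X_0^{-1} X_1, \dots\}$ — more precisely the range of the trajectory under the Schreier action. The return probability is then governed by something like $\mathbb{E}[ (1/|A|)^{|O_n|} ]$ times the base return probability. So $|\log p_{2n}| \approx |O_n|$ in the dominant regime, and I want $|O_n| \approx n^{1/2}$ typically, but I need the probability that $|O_n|$ is *small* (of order $n^{1/2}$) to not be too small.

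So the core of the proof is a large deviations estimate: I need to show that on my tree-like Schreier graph, the random walk can confine itself to a region of size about $n^{1/2}$ for time $n$ with probability at least $e^{-n^{1/2+o(1)}}$. The tree structure is chosen precisely so that a ball of radius $r$ has exponential volume, meaning the walk reaches distance $r$ (and hence range $\sim r$) naturally, but can be *forced* to stay within a small subtree. On a tree, staying within a subtree of size $s$ for time $n$ costs probability roughly $e^{-cn/s^{?}}$ via a spectral/Dirichlet-form bound, and optimizing over $s$ against the $e^{-c|A|^{|O_n|}}$-type cost should yield the $n^{1/2}$ exponent. The tree-like (rather than genuinely tree) structure is needed so that $H$ is a finitely generated group acting with the right geometry while the orbit remains large enough to destroy the Liouville property.

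The main obstacle, and the heart of the paper, will be the \textbf{lower bound on the return probability}: establishing that $\mathbb{P}(|O_n| \lesssim n^{1/2})$ is at least $e^{-n^{1/2 + o(1)}}$ requires a delicate large-deviation analysis of inverted orbits on the Schreier graph, and this is fundamentally harder than the upper bound because it is a statement about a rare confinement event whose probability must be pinned down up to the exponent. I expect this to combine a careful choice of the tree-like Schreier graph (so that the relevant isoperimetric/spectral profile has the right scaling), an estimate for the inverted-orbit size in terms of the confinement region, and a second-moment or entropy argument to produce the confinement with the claimed probability. Separately — and this is what makes the result nontrivial rather than a known $\gamma=1/2$ example — I must verify \emph{non-Liouville}: I would exhibit a nonconstant bounded harmonic function (equivalently, positive speed), typically by showing the lamp configuration retains nontrivial information in the tail $\sigma$-algebra, which holds because the inverted orbit is transient enough on the tree that lamps at large radius are almost never revisited and hence encode a nondegenerate boundary. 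Reconciling "return probability wants small orbits" with "non-Liouville wants large transient orbits" at exactly the threshold $1/2$ is the delicate balance the construction must strike.
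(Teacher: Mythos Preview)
Your architectural instincts are right: the group is a permutational wreath product $\mathbb{Z}_2 \wr_S \Gamma$, the return-probability lower bound comes from a confinement/large-deviation estimate for the inverted orbit, and non-Liouville follows cleanly from transience of the induced walk on the Schreier graph $S$ (this is a known criterion, so once $S$ is chosen transient that half is essentially free). But your picture of the geometry of $S$ is off in a way that would derail the optimization, and your proposed mechanism for the confinement bound is not the one that works here.

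The Schreier graph does \emph{not} have exponential volume. It is tree-like only in its branching pattern; the edges of the tree are replaced by long one-dimensional cycles (``bubbles'') of length $\alpha_k \approx 2^k/k^2$, so that the ball of radius $r$ around the root has volume $r^{2+o(1)}$. This near-two-dimensional volume growth is exactly what makes the exponent come out to $1/2$: the number of lamp configurations supported in $B_m(o)$ is $\approx e^{m^{2+o(1)}}$, and balancing this against a confinement cost $e^{-cn/m^2}$ gives $m \approx n^{1/4}$ and hence exponent $1/2$. With exponential volume the balance would land nowhere near $1/2$. Correspondingly, the confinement estimate is not obtained from a spectral or Dirichlet-form bound on $S$: the generator $a$ acts by translation along the one-dimensional bubbles, so the walk on $\Gamma$ projects to a lazy simple random walk on $\Z$, and the classical bound $\Pp(\text{range}\subseteq[-m,m])\gtrsim e^{-cn/m^2}$ on $\Z$ is then upgraded, via a case analysis exploiting the exponential spacing of the branching points, to the statement that on this event no vertex of $S$ is displaced more than $Km$ by any subword, so the inverted orbit sits in $B_{Km}(o)$. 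One further point you are missing: in the Cauchy--Schwarz bound $p_{2n}(e,e)\geq p_n(A)^2/|A|$, the set $A$ must also record the base-group element, and $\Gamma$ is not of polynomial growth, so a separate counting argument is needed to show that elements of $\Gamma$ representable by words moving every vertex at most $Km$ admit a concise description (there are at most $e^{m^{2+o(1)}}$ of them).
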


In other words, the return probability for this random walk satisfies the lower bound $p_{2n}(e,e) \geq K e^{-n^{1/2 + o(1)}}$ for some constant $K>0$ and the random walk has positive speed. Previously the smallest known return probability exponent for a non-Liouville group was $3/5$ for the lamplighter group $\Z_2 \wr \Z^3$ (\cite{saloff-coste}). Determining a good upper bound for the return probability on $G$ seems to be an interesting problem in its own right.

\subsection*{Idea of the construction}

We now sketch the idea of our construction. Among the groups for which one can provide precise asymptotics for the return probabilities are the lamplighter groups $\Z_2 \wr \Z^d$. It is known \cite[Theorem 3.5]{saloff-coste} that in this case we have $\gamma = \frac{d}{d+2}$ - in particular, for $d=2$ we obtain a group with $\gamma = 1/2$. The group $\Z_2 \wr \Z^2$ is Liouville, but only barely so, as its speed satisfies $\mathbb{E} d(X_0, X_n) \approx \frac{n}{\log n}$. Thus the idea is that if one could in some sense do the lamplighter construction for $d \approx 2 + \varepsilon$ for some small $\varepsilon$, or even $d \approx 2 + o(1)$ (which would correspond to putting the lamps on a graph with volume growth slightly faster than quadratic), one would get a group with $\gamma$ close to $1/2$ and, if the graph grows quickly enough, positive speed.

The problem is of course that there are no ``$2 + \varepsilon$''-dimensional Cayley graphs. Nevertheless, one can carry out the lamplighter construction over an almost two dimensional graph (this time only a Schreier graph, not a Cayley graph) if we move from ordinary wreath products to {\it permutational wreath products}. They are a generalization of wreath products to the setting where a finitely generated group acts on a Schreier graph (the usual wreath product would correspond to the group acting on itself). They share some similarities with the ordinary lamplighter groups, but there are also important differences (see Section \ref{section:preliminaries} for more discussion).

For the construction of the group $G$ we define a tree-like Schreier graph $S$ which grows sufficiently quickly so that the simple random walk on it is transient. The graph naturally defines a group $\Gamma$ which we call the {\it bubble group}. The group $G$ is then defined as the permutational wreath product $\Z_2 \wr_{S} \Gamma$, which corresponds to putting $\Z_2$-valued lamps on $S$, with $\Gamma$ acting on lamp configurations. One can show that this product is non-Liouville as soon as $S$ is transient.

In the case of the usual lamplighter group $\Z_2 \wr \Z^d$, providing a lower bound on the return probability requires understanding the range of the simple random walk on the underlying base graph $\Z^d$ (roughly speaking, the dominant contribution to returning to identity in the wreath product comes from switching off all the lamps visited, and the number of visited lamps is governed by the range of the underlying random walk). To obtain a sharp bound we need to know certain large deviation estimates for the range, not only its average size. For permutational wreath products the situation is more complicated, as the size of the lamp configuration on $S$ is governed not by the range of the simple random walk on $S$, but by the {\it inverted orbit process}. This is a different random process which is generally not as well understood. In our case the graph $S$ has large parts which locally look like $\Z$, so one can still analyze the inverted orbits using large deviation estimates for $\Z$.

As a closing remark we mention that the idea of using ``bubble graphs'' comes from looking at orbital Schreier graphs of certain groups of bounded activity acting on trees (used in \cite{balint-gidi} to provide examples of groups with speed exponents between $3/4$ and $1$), which have somewhat similar branching structure. In particular, Gady Kozma (personal communication, see also \cite{gady-gidi}) proposed looking at similar groups permuting vertices of slowly growing trees as examples in group theory. In general it would be desirable to obtain a better understanding of inverted orbits and probabilistic parameters (return probabilities, speed, entropy) on related groups of this type. Some results along these lines can be found for example in \cite{brieussel}, where entropy and return probability exponents on groups of directed automorphisms of bounded degree trees are analyzed.

\subsection*{Structure of the paper and notation}

The paper is structured as follows. In Section \ref{section:preliminaries} we provide the background on permutational wreath products, inverted orbits and switch-walk-switch random walks used for the wreath products. In Section \ref{section:construction} we define the family of Schreier graphs and bubble groups used in the main construction. In Section \ref{section:bounds} we provide estimates on the size of inverted orbits for random walks on the graph. In Section \ref{section:liouville} we state the theorem used to deduce the non-Liouville property from transience and provide a criterion for checking that the graph defined in the previous section is transient. In Section \ref{section:lower-bound} we fix the Schreier graph and the bubble group, prove the graph's transience and provide lower bounds on return probabilities (using results from Section \ref{section:construction}), thus proving Theorem \ref{th:main}.

Throughout the paper by $c$ we will denote a positive constant (independent of parameters like $m$ or $n$) whose exact value is not important and may change from line to line. We will also use the notation $f(n) \lesssim g(n)$ meaning $f(n) \leq C g(n)$ for some constant $C > 0$.

\section{Preliminaries}\label{section:preliminaries}

Let us recall the notion of a permutational wreath product. Suppose we have a finitely generated group $\Gamma$ acting on a set $S$ and a finitely generated group $\Lambda$ (in our case this group will be finite). For $x \in S$ we will denote the action of $g \in \Gamma$ on $x$ by $x.g$. The graph will usually have a distinguished vertex $o$ called the {\it root}.

The {\it permutational wreath product} $\Lambda \wr_{S} \Gamma$ is the semidirect product $ \bigoplus_{S} \Lambda  \rtimes \Gamma$, where $\Gamma$ acts on the direct sum by permuting the coordinates according to the group action. Elements of the permutational wreath product can be written as pairs $(f, g)$, where $g \in \Gamma$ and $f : S \to \Lambda$ is a function with only finitely many non-identity values. For two such pairs $(f,g)$, $(f', g')$ the multiplication rule is given by:
\[
(f,g)(f', g') = (f f^{' g^{-1}}, g g')
\]
where $f^{ g^{-1}}$ is defined as $f^{ g^{-1}}(x) = f(x.g)$. If $\Gamma$ and $\Lambda$ are finitely generated, then $\Lambda \wr_{S} \Gamma$ is also finitely generated.

By $\supp f$ we will denote the set of vertices of $S$ at which $f(s)$ is not identity.

The usual wreath products (with $S = \Gamma$) are often called lamplighter groups - we think of $f$ as being a configuration of lamps on $S$ and $g$ being the position of a lamplighter. A random walk on the lamplighter group corresponds to the lamplighter doing a random walk on $\Gamma$ and changing values of the lamps along his trajectory.

By analogy with the usual wreath product we will call $\Gamma$ the {\it base group} and $\Lambda$ the {\it lamp group}. There are however important differences in how random walks on permutational wreath products behave. To see this, consider a symmetric probability distribution $\mu$ on $\Gamma$ and a {\it switch-walk-switch} random walk $\tilde{X}_n$ on $\Lambda \wr_{S} \Gamma$:
\[
\tilde{X}_n = \prod\limits_{i=1}^{n} (l_{i},id_{\Gamma}) (id_{\Lambda},g_{i}) (l_{i}',id_{\Gamma})
\]
Here $g_{i}$ are elements of $\Gamma$ chosen independently according to $\mu$ and $l_{i}$, $l_{i}'$ are independent random {\it switches} of the form:
\[
l_{i}(x) = \begin{cases} id_{\Lambda} &\mbox{if } x \neq o \\ 
L & \mbox{if } x = o \end{cases} 
\]
where $L$ is chosen randomly from a fixed symmetric probability distribution on $\Lambda$. We can write $\tilde{X}_{n} = (X_n, Z_n)$, where $Z_n = g_1 \ldots g_n$ is the random walk on $\Gamma$ corresponding to $\mu$ and $X_n$ is a random configuration of lamps on $S$. We will always assume that the probability distribution on $\Lambda$ is nontrivial.

Now observe that if we interpret this walk as a lamplighter walking on $\Gamma$ and switching lamps on $S$, the switches happen at locations $o, o.g_{1}^{-1}, o.g_{2}^{-1}g_{1}^{-1}, \ldots$, $o.g_{n}^{-1}\ldots g_{2}^{-1} g_{1}^{-1}$. For ordinary wreath products, with $o$ being the identity of the base group, this is the same as the orbit of the left Cayley graph, $o, g_{1}^{-1}.o, g_{2}^{-1}g_{1}^{-1}.o, \ldots$, $g_{n}^{-1}\ldots g_{2}^{-1} g_{1}^{-1}.o$. However, in general the set of locations at which switches happen behaves differently from the usual orbit - for example, it does not even have to be connected.

This phenomenon motivates the definition of the inverted orbit. Suppose that, as above, we have a group $\Gamma$, acting from the right on a set $S$, and a word $w = g_1 \ldots g_n$, where $g_{i}$ are generators of $\Gamma$. Given $o \in S$, its {\it inverted orbit} under the word $w$ is the set $\Oo(w)$ = $\{o, o.g_{1}^{-1}, o.g_{2}^{-1} g_{1}^{-1}, \ldots, o.g_{n}^{-1}g_{n-1}^{-1} \ldots g_{1}^{-1} \}$.

Likewise, suppose we have a symmetric probability distribution $\mu$ on $\Gamma$ and the corresponding random walk $Z_{n} = g_1 g_2 \ldots g_n$, where each $g_i \in \Gamma$ is chosen independently according to $\mu$. Given $o \in S$, its inverted orbit under the random walk $Z_n$ is the (random) set $\Oo(Z_n)$ = $\{o, o.g_{1}^{-1}, o.g_{2}^{-1} g_{1}^{-1}, \ldots, o.g_{n}^{-1}g_{n-1}^{-1} \ldots g_{1}^{-1} \}$. We call the set-valued process $\Oo(Z_n)$ the {\it inverted orbit process} on $S$. Abusing the notation slightly we will denote by $Z_n$ both the trajectory of the random walk up to time $n$ and the corresponding group element.

As noted above, this is not the same as the ordinary orbit, which would correspond to the set $\{ o, o.g_{1}, o.g_{1} g_{2}, \ldots, o.g_{1} g_{2} \ldots g_{n} \}$. In particular, the inverted orbit process is not a reversible Markov process.

There are many examples in which permutational wreath products behave differently from the usual wreath products. For instance, while usual wreath products always have exponential growth if the base group is infinite and the lamp group is nontrivial, permutational wreath products can have intermediate growth. This is directly related to the difference between the behavior of inverted orbits and ordinary orbits (see \cite{erschler-bartholdi-growth} and other work by Bartholdi and Erschler).

\section{The bubble group}\label{section:construction}

We start by defining the Schreier graph and the group acting on it.  Fix a {\it scaling sequence} $1 \leq \alpha_1 \leq \alpha_2 \leq ...$. The corresponding graph $S(\alpha)$ is constructed as follows. The edges of the graph are labelled by two generators $a$, $b$ and their inverses. The graph is constructed recursively - the first level consists of the {\it root} $o$, followed by a cycle of length $2 \alpha_1$. The $n$-th level is defined in the following way -  place a cycle of length $3$ (called a {\it branching cycle}), labelled cyclically by $b$, in the middle of each cycle from the previous level so that each cycle is split into two paths. Then each of the remaining two vertices on the branching cycle is followed by a cycle of length $2 \alpha_n$ (see the picture below). For a given cycle from the $n$-th level we will denote its starting point by $b_n$ (with $b_1 = o$). We will think of the graph as extending to the right, so the particles most distant from the root are the rightmost ones.

The edges of every path are labelled by $a$ and $a^{-1}$ and every vertex, apart from the vertices on the branching cycles, is mapped by $b$ and $b^{-1}$ to itself.

\begin{figure}[h!]
  \centering
    \includegraphics[scale=0.27]{./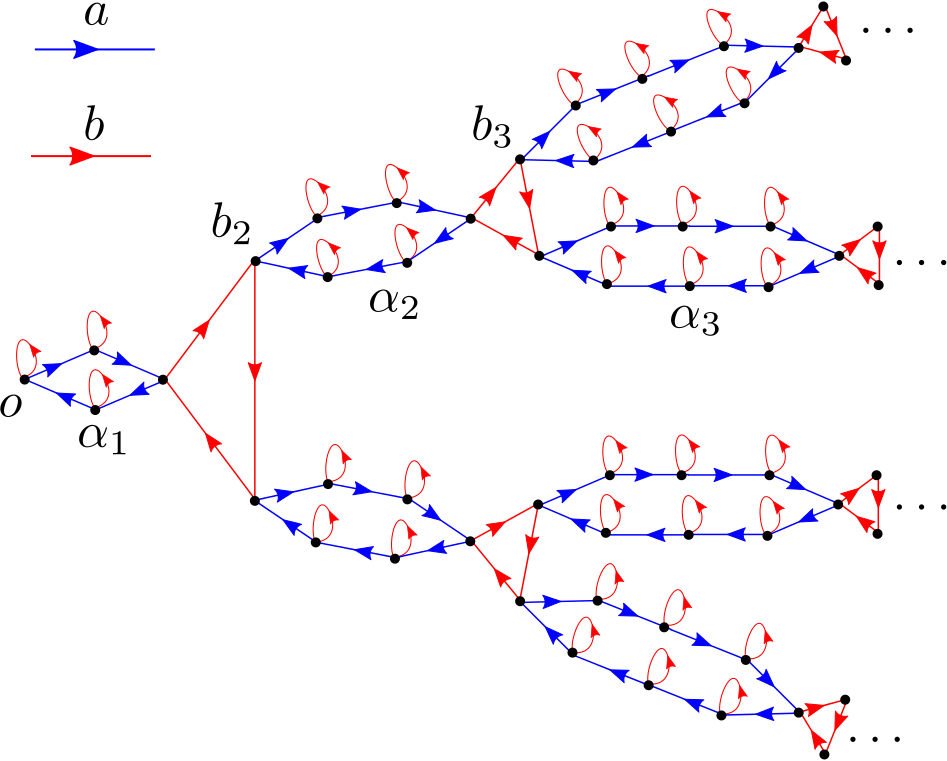}
\caption{First three levels of the Schreier graph $S(\alpha)$ for $\alpha_1 = 2$, $\alpha_2 = 3$, $\alpha_3 = 4$.}
\end{figure}

From this graph we obtain a group in natural way. Each of the generators $a,b$ and their inverses defines a permutation of the vertices of $S(\alpha)$ and we define {\it the bubble group} $\Gamma(\alpha)$ as the group generated by $a$ and $b$. $\Gamma(\alpha)$ acts on $S(\alpha)$ from the right and by $x.g$ we will denote the action of $g \in \Gamma(\alpha)$ on a vertex $x \in S(\alpha)$. By $d(x,y)$ we will denote the distance of $x$ and $y$ in $S$.

\section{Bounds on the inverted orbits}\label{section:bounds}

In what follows we denote $S(\alpha)$ and $\Gamma(\alpha)$ by $S$ and $\Gamma$ for simplicity.

Consider the simple random walk $Z_n$ on $\Gamma$ (each of the generators $a, b, a^{-1}, b^{-1}$ is chosen with equal probability) and the corresponding inverted orbit process $\Oo(Z_n)$ on $S$. Our goal is to prove that, for a suitably chosen scaling sequence, the inverted orbit process on the Schreier graph $S$ satisfies the same bound on the range as the simple random walk on $\Z$.

Let $s_k = \alpha_1 + \ldots + \alpha_k + k$ be the total distance from $o$ to the branching point $b_{k+1}$, with $s_{0} = 0$.
\begin{assumption}\label{eq:scaling-assumption}
From now on we will assume that the scaling sequence satisfies:
\[
d s_{k-1} \leq \alpha_{k}
\]
for all $k \geq 2$ and some constant $d > 0$.
\end{assumption}

In other words, we require each level to be of length comparable to the sum of all previous levels, so that the graph $S$ is like a tree with branches of length growing at least exponentially.

We want to reduce bounding the inverted orbit of $Z_{n}$ to analyzing a one-dimensional random walk. To any given word $w$ in $a,b,a^{-1}, b^{-1}$ we can naturally associate a path on $\Z$  - $a$ corresponds to moving right, $a^{-1}$ corresponds to moving left and $b, b^{-1}$ both correspond to staying put. As $a,b,a^{-1}, b^{-1}$ appear with equal probability as steps of $Z_n$, we get that the random walk $Z_{n} = g_{1} \ldots g_{n}$ projects to a {\it lazy random walk} $\hat{Z}_{n} = \hat{g}_{1} \ldots \hat{g}_{n}$ on $\Z$ (started at the origin), which moves right with probability $1/4$, moves left with probability $1/4$ and stays put with probability $1/2$.

Let $R_{n}$ denote the range of $\hat{Z}_{n}$, i.e. the set of all vertices visited by $\hat{Z}_{n}$ up to time $n$. Let $A_{n,m}$ denote the event that the range of $\hat{Z}_{n}$ is contained in a small ball, $A_{n,m} = \{ R_{n} \subseteq [-m, m] \}$. We have the following lemma on large deviations of a lazy random walk:

\begin{lemma}\label{lm:lazy}
For every $n$, $m \geq 1$ we have:
\[
\pr{A_{n,m}} = \pr{ R_{n} \subseteq [-m, m]} \gtrsim e^{-c \frac{n}{m^2}}
\]
\end{lemma}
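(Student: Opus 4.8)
The plan is to observe that the event $A_{n,m}$ is exactly the event that the lazy walk never leaves the interval $[-m,m]$ up to time $n$: since $\hat{Z}$ starts at the origin and $R_n$ is the set of all sites it occupies, $R_n \subseteq [-m,m]$ if and only if $\hat{Z}_t \in [-m,m]$ for every $t \le n$. Writing $I = \{-m, \dots, m\}$, I would encode this survival probability through the substochastic matrix $Q$ obtained from the transition matrix of the lazy walk by restricting to $I$ and killing any step that would leave $I$ (so $Q_{x,x} = 1/2$ and $Q_{x,x\pm 1} = 1/4$ whenever $x, x\pm 1 \in I$, with the mass that would exit simply lost). Then $\pr{A_{n,m}} = \sum_{y \in I} (Q^n)_{0,y} = (Q^n \mathbf{1})(0)$, where $\mathbf{1}$ is the all-ones vector on $I$.

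The key point is that $Q$ is symmetric (the lazy walk is reversible with respect to the uniform measure), has nonnegative entries, and is irreducible on $I$, so Perron--Frobenius applies. Because the lazy generator factors as $Q = \tfrac12 \mathrm{Id} + \tfrac12 Q_0$, where $Q_0$ is the killed \emph{non-lazy} simple random walk on $I$ with Dirichlet boundary at $\pm(m+1)$, the eigenvectors are the explicit discrete sines $\phi_k(x) = \sin\!\big(\tfrac{k\pi (x+m+1)}{2m+2}\big)$, and the top eigenvalue is $\lambda_1 = \tfrac12 + \tfrac12 \cos\!\big(\tfrac{\pi}{2m+2}\big)$ with strictly positive eigenvector $\phi_1(x) = \sin\!\big(\tfrac{\pi(x+m+1)}{2m+2}\big)$. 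This $\phi_1$ is symmetric and unimodal, peaking at the center $x=0$, where $\phi_1(0) = 1 = \max_x \phi_1(x)$.

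Now I would extract the lower bound purely from the top eigenvalue. Normalizing so that $\max_x \phi_1(x) = 1$, we have $\mathbf{1} \ge \phi_1$ coordinatewise, and since $Q^n$ has nonnegative entries this gives $Q^n \mathbf{1} \ge Q^n \phi_1 = \lambda_1^n \phi_1$ coordinatewise. Evaluating at $0$ yields
\[
\pr{A_{n,m}} = (Q^n \mathbf{1})(0) \ge \lambda_1^n \, \phi_1(0) = \lambda_1^n .
\]
It remains to turn $\lambda_1^n$ into $e^{-cn/m^2}$: from $1 - \cos\theta \le \theta^2/2$ we get $1 - \lambda_1 = \tfrac12\big(1 - \cos\tfrac{\pi}{2m+2}\big) \le \tfrac{c}{m^2}$, and since $1-\lambda_1 < 1/2$ for all $m \ge 1$, the elementary inequality $1 - x \ge e^{-2x}$ gives $\lambda_1^n \ge e^{-2(1-\lambda_1)n} \ge e^{-cn/m^2}$, as claimed.

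I expect the only delicate point to be ensuring that the constant in front of $\lambda_1^n$ does not degrade with $m$, i.e. that the overlap of the starting Dirac mass with the Perron eigenvector stays bounded away from $0$ uniformly in $m$. This is exactly what the explicit eigenfunction buys us: its peak sits at the starting point $0$, so the overlap is in fact maximal. An alternative that avoids diagonalizing $Q$ is a renewal/block argument: partition $\{1, \dots, n\}$ into blocks of length $\asymp m^2$ and show, via diffusive scaling (an invariance principle or a reflection estimate), that from any starting point in $[-m/2, m/2]$ the walk stays in $[-m,m]$ throughout a block and ends in $[-m/2,m/2]$ with probability at least a constant $p_0 > 0$; chaining the $\asymp n/m^2$ blocks by the Markov property then gives $\pr{A_{n,m}} \ge p_0^{\,cn/m^2} = e^{-cn/m^2}$. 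The subtlety there is keeping the per-block probability bounded below by a genuine constant (with no $1/m$ factor), which is why one must confine the endpoint to a central sub-interval rather than pin it to a single site.
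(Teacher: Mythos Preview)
Your argument is correct. The paper itself does not prove this lemma at all: it simply cites \cite[Lemma~1.2]{alexopoulos} (and \cite[Theorem~3.12]{saloff-coste} for the general setting), so any self-contained proof is necessarily ``different.'' What you have written is in fact the standard spectral derivation behind those references: diagonalize the Dirichlet-killed transition operator on $\{-m,\dots,m\}$, identify the top eigenpair $\bigl(\lambda_1,\phi_1\bigr)$ with $\lambda_1=\tfrac12+\tfrac12\cos\tfrac{\pi}{2m+2}$, and use the positivity of $\phi_1$ together with $\phi_1(0)=\max_x\phi_1(x)=1$ to get $\pr{A_{n,m}}=(Q^n\mathbf{1})(0)\ge\lambda_1^n\ge e^{-cn/m^2}$. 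The only point worth watching, which you explicitly address, is that the overlap of $\delta_0$ with the Perron eigenvector is bounded below uniformly in $m$; since the walk starts at the center of the interval, this overlap is in fact equal to $1$, so no prefactor is lost. Your alternative block argument (survive on successive time windows of length $\asymp m^2$ with uniformly positive probability and chain via Markov) is also a valid route and is closer in spirit to how such bounds are often packaged in the heat-kernel literature; either approach is more than sufficient here.
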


\begin{proof}
See \cite[Lemma 1.2]{alexopoulos} (or \cite[Theorem 3.12]{saloff-coste} for a more general case).
\end{proof}

The following simple observation will be useful: if the trajectory $\hat{g}_{1}\ldots \hat{g}_{n}$ has its range bounded between $-m$ and $m$, then for any subword $w = g_{k} g_{k+1} \ldots g_{l}$ the trajectory $\hat{g}_{k} \hat{g}_{k+1} \ldots \hat{g}_{l}$ (started at the origin) has its range bounded between $-2m$ and $2m$. Furthermore $w$ has range bounded between $-2m$ and $2m$ if and only if $w^{-1} = g_{l}^{-1} \ldots g_{k+1}^{-1} g_{k}^{-1}$ satisfies the same bound.

Now consider a particle moving on the graph according to the action of a word $w$ or its inverse, starting at some vertex $x$.  For two vertices $y,z$ we will say that $y$ is \emph{to the right} (resp. \emph{to the left}) of $z$ if $d(o,z) < d(o,y)$ (resp. $d(o,z) > d(o,y)$).

We will repeatedly use the following lemma (which is a direct consequence of the observation above and the assumption $A_{n,m}$):

\begin{lemma}\label{lm:all-x}
Suppose that $A_{n,m}$ holds for a word $w$. Let $v$ be a vertex visited by the particle at some sequence of times and consider any subword $w'$ of $w$ corresponding to the minimal part of the trajectory between two subsequent visits to $v$ (or after the last visit, if $v$ is not visited after certain time). Whenever the particle visits $v$, if there is no branching cycle within distance $2m$ to the right (resp. to the left) of $v$, then $w'$ will move the particle no further away than $2m$ to the right (resp. to the left) from $v$.
\end{lemma}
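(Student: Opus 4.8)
The plan is to reduce the motion of the particle in a neighbourhood of $v$ to its one-dimensional projection and then to invoke the range observation stated just before the lemma. First I would record what the hypothesis buys us. Since there is no branching cycle within distance $2m$ of $v$, the ball of radius $2m$ around $v$ is a simple path: its edges are labelled only by $a,a^{-1}$, and $b,b^{-1}$ fix every vertex lying off a branching cycle. On such a path the vertices \emph{to the right} and \emph{to the left} of $v$ are separated by $v$ itself, so the particle cannot pass from one side to the other without visiting $v$. As $w'$ is by definition the minimal part of the trajectory between two consecutive visits to $v$ (or the final piece after the last visit), the particle does not visit $v$ during $w'$ and therefore stays entirely on one side of $v$; it then suffices to bound how far it travels on that side, say the right.

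Next I would set up the local identification with $\Z$. Restricting to the branching-cycle-free segment of radius $2m$ to the right of $v$, I assign to each vertex its signed graph-distance from $v$ (positive towards the right). Because this segment carries a consistent $a$-orientation, a step by $a$ changes this coordinate by $+1$ or $-1$ (the same sign throughout the segment), a step by $a^{-1}$ by the opposite, and a step by $b$ or $b^{-1}$ leaves it unchanged. Hence, as long as the particle remains in the segment, its signed position coincides, up to a single global reflection, with the position of the projected lazy walk determined by $w'$ and started at the origin. I would then feed in the range estimate: since $A_{n,m}$ holds for $w$ and $w'$ is a subword of $w$, the observation preceding the lemma gives that the projected trajectory of $w'$, started at the origin, has range contained in $[-2m,2m]$, and the reflection does not affect this. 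Consequently the signed position of the particle cannot exceed $2m$ in absolute value while the identification is valid, and in particular it cannot get further than $2m$ to the right of $v$. The case of the left is identical, using that $w$ and $w^{-1}$ obey the same range bound.

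The step that needs the most care is closing the apparent circularity between the identification and the bound it yields: the correspondence with $\Z$ is only guaranteed inside the radius-$2m$ segment, yet it is precisely this correspondence that we use to keep the particle inside that segment. I would resolve this by a discrete first-passage argument. Let $t^{*}$ be the first time at which the signed displacement would reach $2m$. Up to time $t^{*}$ the particle lies in the branching-cycle-free segment, so the identification of the second paragraph applies and the range bound forbids the projected position from ever exceeding $2m$; since the particle moves one vertex at a time it cannot jump over the value $2m$, so it never leaves the segment to the right and the identification remains valid throughout $w'$. Combined with the one-sidedness of the excursion established in the first paragraph, this gives the claim. I expect the orientation bookkeeping of the $a$-labelling on the relevant segment and this first-passage bootstrap to be the only genuinely delicate points; everything else is a direct transcription of the $\Z$-projection.
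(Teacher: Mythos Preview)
Your approach matches the paper's intended argument (identify the branching-free segment with an interval in $\Z$ and invoke the range bound for subwords), and you supply far more detail than the paper, which simply declares the lemma a ``direct consequence'' of the preceding observation. The core of your proof --- the identification in the second paragraph and the first-passage bootstrap in the third --- is sound.

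There is, however, one genuine slip. You read the hypothesis as two-sided (``no branching cycle within distance $2m$ of $v$'') and from this conclude that the ball $B_{2m}(v)$ is a simple path, which you then use to argue that $v$ separates left from right. The lemma's hypothesis is only one-sided: no branching within $2m$ \emph{to the right}. This matters, because in the applications (the proof of Theorem~\ref{th:all-x}, cases (2) and (3)) the vertex $v$ is taken to be $b_{k+1}$ or $b_{l}$, which sits \emph{on} a branching $3$-cycle --- so there is branching at distance $0$ on the left, and $B_{2m}(v)$ is certainly not a path.

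Your argument is easily repaired. Only the segment of length $2m$ to the right of $v$ is guaranteed to be a simple $a$-labelled path. If the first step of $w'$ takes the particle onto this right segment, then your first-passage argument applies verbatim: the particle can leave the segment only through $v$ (ending $w'$) or through the far end, and the latter is ruled out by the range bound on $\hat{w}'$. If instead the first step goes left or is a $b$-step, the particle is off the right segment; to re-enter it without passing through $v$ it would have to go around the level-$k$ cycle through the midpoint, but that forces the $\Z$-projection to move by at least $\alpha_k > 2m$ in one direction, contradicting the range bound. So the ``stays on one side'' dichotomy you want does hold, just not for the reason you gave.
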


\begin{theorem}\label{th:all-x}
Suppose that the scaling sequence satisfies Assumption \ref{eq:scaling-assumption}. If $A_{n,m}$ holds for the trajectory $Z_{n} = g_1 \dots g_n$, then for each $x \in S$ and every subword $w = g_{k} g_{k+1} \ldots g_{l}$ or its inverse we have $d(x, x.w) \leq Km$ (for some $K \geq 1)$.
\end{theorem}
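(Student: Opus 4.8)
The plan is to control the entire trajectory of the particle through its distance from the root; write $\rho(v) = d(o,v)$. Since every generator moves the particle to an adjacent vertex, and $b^{\pm 1}$ acts nontrivially only on the branching cycles, each step changes $\rho$ by at most $1$. I will bound separately how far right and how far left the trajectory of $w'$ can travel (the same bound applies to $(w')^{-1}$, whose projected range is controlled by the same observation), that is, I will bound $\rho_{\max}-\rho(x)$ and $\rho(x)-\rho_{\min}$, where $\rho_{\max},\rho_{\min}$ are the extreme values of $\rho$ along the trajectory. Once both are shown to be $O(m)$, the theorem follows from the tree-like structure of $S$: if $y=x.w'$ and $a$ is the lowest common ancestor of $x$ and $y$, then the walk from $x$ to $y$ must pass through $a$, so $\rho(a)\geq \rho_{\min}$ and $d(x,y)\leq(\rho(x)-\rho(a))+(\rho(y)-\rho(a))+O(1)\leq(\rho(x)-\rho_{\min})+(\rho_{\max}-\rho_{\min})+O(1)=O(m)$.

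The engine for both bounds is Lemma \ref{lm:all-x}: between consecutive visits to a vertex $v$, if no branching cycle lies within distance $2m$ to the right (resp.\ left) of $v$, the particle cannot move more than $2m$ to the right (resp.\ left) of $v$. Thus the particle can increase $\rho$ substantially only by repeatedly crossing branching cycles, and it can cross one only after first coming within $2m$ of it. The branching cycles sit at distances $\approx s_1<s_2<\cdots$ from the root, with consecutive gaps $s_k-s_{k-1}=\alpha_k+1$. Assumption \ref{eq:scaling-assumption} gives $s_k=s_{k-1}+\alpha_k+1\geq(1+d)s_{k-1}$, so the $s_k$ grow geometrically and, moving away from the root, the gaps $\alpha_k$ increase.

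For the rightward bound I would argue that, moving right, the particle meets gaps of increasing size, so there is a first level whose gap exceeds $2m$; Lemma \ref{lm:all-x} forbids crossing it and pins $\rho_{\max}$. If this occurs at once (the gaps around $x$ already exceed $2m$) then the particle cannot travel more than $O(m)$ to the right. Otherwise it crosses some branching cycles, all with gap $\leq 2m$; but a crossed gap $\alpha_k\leq 2m$ forces $s_{k-1}\leq 2m/d$ via $\alpha_k\geq d\,s_{k-1}$, so every crossed cycle, and hence $x$ itself, lies within distance $O(m)$ of the root, making $\rho_{\max}=O(m)$ an absolute bound. The leftward bound is the delicate one, and here the asymmetry of the graph is essential: toward the root the gaps $\alpha_k$ \emph{decrease}, so once the particle begins crossing branching cycles leftward nothing stops it before the root, and the naive hope that it crosses only boundedly many fails. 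The resolution is that reaching the nearest left branch at $\approx s_k$ already requires $\rho(x)-s_k\leq 2m$, and if the particle then continues past it the relevant gap $\alpha_k\leq 2m$ again forces $s_k=O(m)$, whence $\rho(x)\leq s_k+2m=O(m)$ and the whole leftward excursion is bounded by $\rho(x)=O(m)$. In every case $\rho(x)-\rho_{\min}=O(m)$.

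I expect the main obstacle to be exactly this leftward analysis, and in particular making rigorous the informal phrases ``the particle crosses a branching cycle'' and ``the nearest branch to the left.'' The difficulty is that the labelling of $a$ reverses its orientation relative to $\rho$ on the two halves of each cycle, so the projected one-dimensional range does \emph{not} directly bound the $\rho$-displacement across a branch crossing; one cannot simply transport the $[-2m,2m]$ range bound through the branch points. This is why the argument must be organized around Lemma \ref{lm:all-x} (which already encapsulates the correct local comparison with $\Z$) together with the geometric growth of the $s_k$, rather than around a single global projection. Tracking, via Lemma \ref{lm:all-x}, precisely which branching cycles the trajectory can reach and cross, and bookkeeping the constant $K$ through the resulting geometric sum of crossed gaps, is the step requiring the most care.
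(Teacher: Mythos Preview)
Your proposal and the paper's proof rest on the same two ingredients---Lemma~\ref{lm:all-x} and the geometric growth from Assumption~\ref{eq:scaling-assumption}---and both reduce to the same dichotomy: either the nearby gaps $\alpha_j$ exceed $2m$, so Lemma~\ref{lm:all-x} pins the particle locally, or some nearby $\alpha_j\le 2m$, which via $\alpha_j\ge d\,s_{j-1}$ forces $x$ to lie within $O(m)$ of the root. The organization, however, differs. The paper performs a static case split on how many levels the ball $B_{2m}(x)$ meets (one, two, or at least three) and in each case tracks \emph{directly} which vertices the particle can reach, so the bound on $d(x,x.w)$ comes out immediately. You instead bound the radial excursions $\rho_{\max}-\rho(x)$ and $\rho(x)-\rho_{\min}$ separately and then invoke a tree-like lowest-common-ancestor argument to convert these into a bound on $d(x,x.w)$.

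That last conversion is the one place your route genuinely costs more than the paper's. The graph $S$ is not a tree (each bubble is a cycle), so the LCA vertex $a$ you name need not be a cut vertex separating $x$ from $y$; the trajectory can pass around $a$ via the other half of a bubble or through the midpoint. The correct statement---something like $d(x,y)\le 2(\rho_{\max}-\rho_{\min})+O(1)$ for any trajectory from $x$ to $y$---is true, but proving it requires a short separate argument about how connected subsets of $S$ sit inside the tree-of-bubbles structure, not just the one-line LCA remark you give. The paper never needs this lemma because its case analysis already names the specific branching vertices $b_{k+1}$ or $b_l$ the particle must pass through, and bounds the actual graph distance from $x$ to those vertices directly. (A minor point: in your leftward analysis the ``relevant gap'' that must be $\le 2m$ to force $x$ near the root is $\alpha_{k-1}$, the gap \emph{past} the first branch reached, not $\alpha_k$; your logic is right but the index slips.)
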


\begin{proof}

The idea of the proof is that due to the assumption on exponential-like growth, the largest level contained in $B_{m}(x)$ is roughly of the same size as the whole ball, so we can bound the particle's position by looking only at its behavior at the last level (or levels of comparable size), where it behaves like a walk on $\Z$.

We consider three types of vertices: such that $B_{2m}(x)$ intersects only one level, intersects two levels or intersects at least three levels.
\\ \\
(1) In the first case there is no branching cycle within distance $2m$ from $x$, so the ball $B_{2m}(x)$ is isomorphic to a ball in $\Z$ and we can directly use the assumption $A_{n,m}$ to conclude that the particle stays within distance at most $2m$ from $x$.
\\ \\
(2) In the second case, assume that $x$ belongs to the $k$-th level and the ball intersects also the $k+1$-st level (the case when the ball intersects the $k-1$-st level is analogous). To the left the ball doesn't intersect any branching cycle, so we can again directly use the property $A_{n,m}$. To the right, either the particle doesn't hit any $b_{k+1}$, in which case it is within distance $2m$ to the right of $x$, or it hits $b_{k+1}$ (for one of the two cycles from the $k+1$-st level) - then we can apply Lemma \ref{lm:all-x} with $v = b_{k+1}$ to conclude that it never goes further than $2m$ to the right of $b_{k+1}$. This implies that we always stay within distance at most $4m$ from $x$.
\\ \\
(3) In the third case $x$ must be close to the origin. Namely, if $x$ belongs to the $k$-th level, then at least one of $\alpha_{k-1}, \alpha_{k}, \alpha_{k+1}$ is smaller than $4m$ (since $B_{2m}(x)$ intersects at least three levels). Since $\alpha_{k-1} \leq \alpha_{k} \leq \alpha_{k+1}$, we have $\alpha_{k-1} < 4m$. As $\alpha_{k-1} \geq d s_{k-2}$ by Assumption \ref{eq:scaling-assumption}, we have $(1+d) \alpha_{k-1} \geq d (s_{k-1} - 1)$. Now $B_{2m}(x)$ intersects the $k-1$-st level (otherwise we would have $2m \leq \alpha_{k} \leq \alpha_{k+1}$ and the ball would intersect only two levels), so $d(o,x) \leq s_{k-1} + 2m$. This gives us:
\[
d(o,x) \leq \frac{1+d}{d} \alpha_{k-1} + 1 + 2m \leq \left(2 + \frac{4(1+d)}{d}\right)m + 1 \leq \left(3 + \frac{4(1+d)}{d}\right)m
\]
Thus $x$ belongs to a ball $B_{c_{1} m}(o)$, where $c_{1}$ is the constant on the right hand side of the inequality above.

Now take the first level $l$ which has $\alpha_{l} \geq 4m$. Then $b_{l}$ is to the right of $x$ and $\alpha_{l-1} < 4m$. We have $d(o, b_{l}) =  s_{l-1}$ and $d s_{l-2} \leq \alpha_{l-1}$, so $d (s_{l-1} - 1) \leq (1+d) \alpha_{l-1} < (1+d)4m$. Thus:
\[
d(o, b_{l}) < \frac{4(1+d)}{d}m +1 \leq \left(\frac{4(1+d)}{d} + 1\right)m
\]
Let $c_{2}$ be the constant multiplying $m$ in the inequality above. If the particle stays to the left of $b_{l}$, it is within distance at most $c_{2}m$ from the origin and thus within distance at most $(c_{1} + c_{2})m$ from $x$. If it hits $b_{l}$ at some point, then, as $\alpha_{l} \geq 4m$, for each visit we can apply Lemma \ref{lm:all-x} with $v = b_{l}$ to conclude that the particle stays within distance $4m$ to the right from $b_{l}$, so it is within distance $(4 + c_{2})m$ from the origin and thus within distance $(4 + c_{1} + c_{2})m$ from $x$.

Thus the theorem holds with $K = 4 + c_{1} + c_{2}$.
\end{proof}

\begin{corollary}\label{th:estimate}
Under the assumption of the previous theorem, if $A_{n,m}$ holds, the inverted orbit process $\Oo(Z_n)$ on $S$ satisfies $\Oo(Z_n) \subseteq B_{Km}(o)$, where $B_{Km}(o)$ denotes the ball of radius $Km$ and center $o$ in $S$ (with $K$ as in the previous theorem).
\end{corollary}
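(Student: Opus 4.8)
The plan is to observe that the corollary is essentially a direct specialization of Theorem \ref{th:all-x} to the base vertex $x = o$, combined with the bookkeeping that identifies each orbit point as the image of $o$ under the inverse of a prefix. Recall that by definition the $k$-th point of the inverted orbit is $o.g_{k}^{-1} g_{k-1}^{-1} \cdots g_{1}^{-1}$, and that the group element appearing here is exactly $(g_{1} g_{2} \cdots g_{k})^{-1}$. Writing $w_{k} = g_{1} g_{2} \cdots g_{k}$ for the length-$k$ prefix of $Z_{n} = g_{1} \cdots g_{n}$, the $k$-th orbit point is therefore $o.w_{k}^{-1}$.

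First I would note that each prefix $w_{k}$ is in particular a subword of $Z_{n}$ in the sense of the theorem (taking the index range to be $1, \ldots, k$), so Theorem \ref{th:all-x} applies to it. Since $A_{n,m}$ holds by hypothesis, the theorem gives, for every vertex $x$ and for both $w_{k}$ \emph{and its inverse}, the displacement bound $d(x, x.w) \leq Km$. Applying this with $x = o$ and with the inverse $w_{k}^{-1}$ yields $d(o, o.w_{k}^{-1}) \leq Km$, i.e. the $k$-th orbit point lies in $B_{Km}(o)$. Letting $k$ range over $0, 1, \ldots, n$ exhausts every element of $\Oo(Z_{n})$ (the case $k=0$ being the trivial point $o$ itself), which establishes the containment $\Oo(Z_{n}) \subseteq B_{Km}(o)$.

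There is no real obstacle to overcome: the genuine work of controlling the particle's displacement under arbitrary subwords has already been carried out in Theorem \ref{th:all-x}, and what remains is purely a matter of matching definitions. The only point I would take care to state explicitly is that the theorem must be invoked in the \emph{inverse} direction, since the inverted orbit is built from the group elements $(g_{1} \cdots g_{k})^{-1}$ rather than from the prefixes $g_{1} \cdots g_{k}$ themselves; this is precisely why the theorem's conclusion was phrased for a subword \emph{and} its inverse, and it is that clause that makes the reduction immediate.
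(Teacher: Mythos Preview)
Your proposal is correct and follows essentially the same approach as the paper: apply Theorem~\ref{th:all-x} with $x=o$ to the inverses of the prefixes $g_1\cdots g_k$ to obtain $d(o, o.g_k^{-1}\cdots g_1^{-1}) \leq Km$ for each $k$, which is exactly the containment claimed. Your explicit remark about needing the ``inverse'' clause of the theorem is the one point worth emphasizing, and it matches the paper's reasoning.
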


\begin{proof}
Recall that $\Oo(Z_n) = \{o, o.g_{1}^{-1}, o.g_{2}^{-1} g_{1}^{-1}, \ldots, o.g_{n}^{-1} g_{n-1}^{-1} \ldots g_{1}^{-1} \}$. We can apply the previous theorem to words of the form $g_{k}^{-1} \ldots g_{2}^{-1} g_{1}^{-1}$ for $k=1, \ldots ,n$. We get that \\ $d(o, o.g_{k}^{-1} \ldots g_{2}^{-1} g_{1}^{-1}) \leq Km$, which proves $\Oo(Z_n) \subseteq B_{Km}(o)$.
\end{proof}

Thus with probability at least a constant times $e^{-c \frac{n}{m^2}}$ no vertex is moved by $Z_{n}$ further than $Km$ from itself and the inverted orbit of $o$ is small (contained in a ball of radius $Km$ around $o$).

\section{Liouville property and transience}\label{section:liouville}

We briefly recall the notions related to the Liouville property and harmonic functions. Given a measure $\mu$ on a group $G$, a function $f : G \to \R$ is said to be {\it harmonic} (with respect to $\mu$) if we have $f(g) = \sum_{h \in G} f(gh)\mu(h)$. $G$ is said to have the {\it Liouville property} if every bounded harmonic function on $G$ is constant. As mentioned in the introduction, this is equivalent to the random walk associated to $\mu$ having zero asymptotic speed. This property a priori depends on the choice of $\mu$ (in the case when $\mu$ is a simple random walk - on the choice of the generating set of $G$).

We want to construct a group which is non-Liouville, i.e. supports nonconstant bounded harmonic functions. For permutational wreath products one can ensure this by requiring that the Schreier graph used in the wreath product is transient:

\begin{theorem}\label{th:liouville}
Let $\Gamma$ and $F$ be nontrivial finitely generated groups and let $\mu$ be a finitely supported symmetric measure on $\Gamma$ whose support generates the whole group. Let $\tilde{\mu}$ be the measure associated to the corresponding switch-walk-switch random walk on the permutational wreath product $F \wr_{S} \Gamma$. If the induced random walk on $S$ is transient, then the group $F \wr_{S} \Gamma$ has nontrivial Poisson boundary, i.e. supports nonconstant bounded harmonic functions (with respect to $\tilde{\mu}$).
\end{theorem}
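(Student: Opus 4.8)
The plan is to show that transience forces the random lamp configuration to stabilize, and that the resulting limit configuration is rich enough to carry nonconstant bounded harmonic functions. Throughout I write the switch-walk-switch walk as $\tilde X_n=(X_n,Z_n)$, with $Z_n=g_1\cdots g_n$ the driving walk on $\Gamma$ and $X_n$ the lamp configuration on $S$, and I denote by $X_\infty=X_\infty^{(id,id)}$ the pointwise limit of $X_n$ when it exists.

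First I would reduce the problem, which concerns the non-Markovian inverted-orbit process, to the genuinely Markovian walk on $S$. Recall that lamps are switched exactly at the inverted-orbit locations $o.Z_k^{-1}$. The elementary identity $o.Z_k^{-1}=v \iff v.Z_k=o$ (apply $Z_k$ on the right) shows that the number of times $v$ is touched by the inverted orbit equals the number of times the \emph{ordinary} Schreier walk $k\mapsto v.Z_k$ — started at $v$, with the same increments $g_i$ — visits the root $o$. Since the support of $\mu$ generates $\Gamma$, this is precisely the induced walk on $S=o.\Gamma$, which is irreducible; transience therefore guarantees that, from any starting vertex, $o$ is visited only finitely often almost surely. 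Hence every vertex of $S$ is touched by the inverted orbit finitely often a.s., each lamp is switched finitely often, and $X_\infty(v):=\lim_n X_n(v)$ exists a.s. for every $v$.

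Next I would convert this limit into harmonic functions. Starting from $\gamma=(f,g)$ the lamp part of $\gamma\tilde X_n$ is $f\cdot X_n^{g^{-1}}$, i.e. $f(v)X_n(v.g)$ at $v$, so the limiting configuration from $\gamma$ is $X_\infty^\gamma\colon v\mapsto f(v)\,X_\infty(v.g)$. For bounded $\Phi$ on configurations put $h_\Phi(\gamma)=\E_\gamma[\Phi(X_\infty^\gamma)]$. Since the limit is unaffected by discarding the first step (a tail property), $X_\infty^\gamma=X_\infty^{\gamma\tilde X_1}$, and conditioning on $\tilde X_1$ gives $h_\Phi(\gamma)=\E[h_\Phi(\gamma\tilde X_1)]=\sum_s\tilde\mu(s)h_\Phi(\gamma s)$, so $h_\Phi$ is bounded and $\tilde\mu$-harmonic. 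To make it nonconstant I take the single-coordinate observable $\Phi(\phi)=\mathbbm{1}[\phi(v_0)=id]$, for which $h_\Phi(f,id)=\Pp(X_\infty(v_0)=f(v_0)^{-1})$; this is nonconstant in $f(v_0)$ as soon as the law of $X_\infty(v_0)$ is non-uniform on $F$. I would secure non-uniformity by choosing $v_0$ deep in a transient direction, so that by transience $\Pp_{v_0}(\text{the induced walk ever reaches } o)$ is small; then $v_0$ is never touched by the inverted orbit with probability close to $1$, producing a heavy atom of $X_\infty(v_0)$ at the identity. (That at least one vertex has hitting probability $<1$ already follows from $\Pp_o(\text{return to }o)<1$.)

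The hard part will be the first reduction: the inverted orbit is not Markovian — not even reversible — so transience cannot be applied to it directly, and the whole argument hinges on the identity re-expressing inverted-orbit visits to $v$ as ordinary-walk visits to $o$. The only other delicate point is nonconstancy, where I must exhibit a coordinate with genuinely non-uniform limiting law rather than rely on bare nondegeneracy of the switches; selecting $v_0$ far out along a transient direction resolves this, after which the harmonicity computation and the stabilization of $X_n$ are routine.
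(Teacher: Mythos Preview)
Your proposal is correct and follows essentially the same line as the paper's sketch: transience forces the lamp values to stabilize, and the limiting lamp defines a nonconstant bounded harmonic function via $x\mapsto \Pp_x(L=e)$. Note that the paper does not actually prove this theorem; it cites \cite{erschler-bartholdi} and \cite{balint-gidi-2} and only outlines the idea, so your write-up is already more detailed than what appears there.

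There is one genuine difference in emphasis worth noting. The paper's sketch looks only at the lamp at the root $o$: for that particular vertex the inverted orbit hits $o$ at time $k$ exactly when $o.Z_k^{-1}=o$, i.e.\ when $o.Z_k=o$, so stabilization of this single lamp follows immediately from transience of the induced walk started at $o$, with no need for your general identity $o.Z_k^{-1}=v\iff v.Z_k=o$. Your route instead establishes that \emph{every} lamp stabilizes (via that identity, which reduces inverted-orbit visits at $v$ to ordinary-walk visits to $o$ from $v$), and then selects a distant $v_0$ to force a heavy atom at $id$ and hence nonconstancy. The paper's version is slightly more economical for stabilization, while yours is explicit about nonconstancy --- a point the paper simply asserts. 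Both lead to the same conclusion; your nonconstancy argument (picking $v_0$ with small hitting probability of $o$) is the standard way to fill the gap the paper leaves, and would equally serve to make the paper's single-lamp version nonconstant by varying the $\Gamma$-coordinate $g$ so that $o.g$ is far from $o$.
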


Related results appear in several places \cite{balint-gidi-2}. The formulation we use here comes from \cite[Proposition 3.5]{erschler-bartholdi}. We briefly sketch the idea of the construction here.

To construct a nonconstant harmonic function on the group, consider the state of the lamp at $o$. Since the walk on $S$ is transient, with probability $1$ this vertex will be visited only finitely many times, so after a certain point the value of the lamp will not change anymore and thus the eventual state $L$ of this lamp is well-defined as $n \to \infty$. Now one can show that for any vertex $x$ the mapping $x \mapsto \mathbb{P}_{x}(L=e)$ (where $\mathbb{P}_{x}$ denotes the probability with respect to a random walk started at $x$) defines a nonconstant bounded harmonic function on the group.

A useful criterion for establishing transience is based on electrical flows (we formulate it for simple random walks). Given a graph $S$, a {\it flow} $I$ from a vertex $o$ is a nonnegative real function on the set of directed edges of $S$ which satisfies Kirchhoff's law: for each vertex except $o$ the sum of incoming values of $I$ is equal to the sum of outgoing values. A {\it unit flow} is a flow for which the outgoing values from $o$ sum up to $1$. The {\it energy} of the flow is given by $\mathcal{E}(I) = \frac{1}{2} \sum\limits_{e} I(e)^2$, where the sum is over the set of all directed edges.

\begin{proposition}[{\cite[Theorem 2.11]{peres-lyons}}]\label{prop:flow}
If a graph $S$ admits a unit flow with finite energy, then $S$ is transient.
\end{proposition}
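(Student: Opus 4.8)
The plan is to place the statement in the standard framework of electrical networks and to use the variational (Thomson) characterization of effective resistance. We interpret the simple random walk on $S$ as the random walk on the network obtained by assigning unit conductance to every edge. The key background fact, which I would take from the theory of reversible chains, is that transience is equivalent to the effective resistance from $o$ to infinity being finite: the probability that the walk started at $o$ never returns to $o$ equals $1/(\deg(o)\, R_{\mathrm{eff}}(o \leftrightarrow \infty))$, so this escape probability is positive precisely when $R_{\mathrm{eff}}(o \leftrightarrow \infty) < \infty$. It therefore suffices to bound this effective resistance using the hypothesis.

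To make $R_{\mathrm{eff}}(o \leftrightarrow \infty)$ concrete I would introduce a finite exhaustion: let $S_1 \subseteq S_2 \subseteq \ldots$ be finite connected subgraphs with $\bigcup_n S_n = S$ and $o \in S_1$, and for each $n$ form the finite wired network $\widehat{S}_n$ by contracting all vertices outside $S_n$ to a single vertex $z_n$. Write $R_n$ for the effective resistance from $o$ to $z_n$ in $\widehat{S}_n$. By Rayleigh monotonicity the sequence $(R_n)$ is nondecreasing, and by definition $R_{\mathrm{eff}}(o \leftrightarrow \infty) = \lim_{n \to \infty} R_n = \sup_n R_n$. Thus it is enough to bound each $R_n$ by a quantity independent of $n$.

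The bound comes from Thomson's principle, which for the finite network $\widehat{S}_n$ states that
\[
R_n = \min\bigl\{ \mathcal{E}(\theta) : \theta \text{ a unit flow from } o \text{ to } z_n \text{ in } \widehat{S}_n \bigr\}.
\]
Given the finite-energy unit flow $I$ from $o$ provided by the hypothesis, I would restrict it to the edges lying inside $S_n$, reattaching each edge leaving $S_n$ to the contracted vertex $z_n$ while keeping its value. Kirchhoff's law holds at every vertex of $S_n$ other than $o$ by assumption, and summing these conservation constraints shows that the net flow into $z_n$ equals the net flow out of $o$, namely $1$; hence the restriction is a genuine unit flow from $o$ to $z_n$. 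Its energy is at most $\mathcal{E}(I)$, since the restricted flow uses only a subset of the edge values of $I$ and discards the nonnegative contributions of purely external edges. By Thomson's principle $R_n \leq \mathcal{E}(I)$ for every $n$, and letting $n \to \infty$ gives $R_{\mathrm{eff}}(o \leftrightarrow \infty) \leq \mathcal{E}(I) < \infty$. Transience of $S$ then follows from the equivalence recorded in the first step.

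The routine steps are the flow restriction and the passage to the limit; the substantive input is Thomson's principle together with the escape-probability formula, which I would cite from the electrical-network literature rather than reprove. The one point that deserves genuine care is the contraction argument identifying the restricted object as a unit flow into $z_n$: one must check that merging all external edges into a single vertex preserves both Kirchhoff's law at the internal vertices and the total unit strength, which does follow from flow conservation but should be verified explicitly rather than asserted, as it is the only place where the global structure of $I$ interacts with the finite truncations.
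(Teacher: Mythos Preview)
The paper does not give its own proof of this proposition: it is stated with a citation to \cite[Theorem~2.11]{peres-lyons} and used as a black box. Your argument is the standard electrical-network proof (Thomson's principle plus the escape-probability/effective-resistance equivalence) and is correct; it is essentially the proof one finds in the cited reference, so there is no discrepancy to discuss.
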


\section{Lower bound on return probability}\label{section:lower-bound}

Consider the Schreier graph $S(\alpha)$ and the bubble group $\Gamma(\alpha)$, depending on a scaling sequence $\alpha = (\alpha_1, \alpha_2, \ldots)$, as described in Section \ref{section:construction}. As mentioned in the introduction, we would like the graph $S(\alpha)$ to be transient and have ``$2 + o(1)$''-dimensional volume growth, and also satisfy the Assumption \ref{eq:scaling-assumption} on exponential-like growth.

To analyze volume growth, consider $n$ such that $s_{k-1} \leq n < s_{k}$ (following the notation of Section \ref{section:bounds}). Because of the branching structure of $S(\alpha)$,  the size of the ball $B_{n}(o)$ of radius $n$ around $o$ satisfies:
\[
|B_{n}(o)| \leq 2 \left(\alpha_1 + 1 + 2(\alpha_2 + 1) + \ldots + 2^{k-1} (\alpha_{k} + 1) \right)
\]
For a scaling sequence satisfying $\alpha_{k} = \alpha^{k + o(k)}$, with $\alpha > 1$, it is easy to see that the volume of the ball will satisfy:
\begin{equation*}
|B_{n}(o)| \leq n^{1 + \frac{\log 2}{\log \alpha} + o(1)}
\end{equation*}
as $n \to \infty$. In particular if we take $\alpha_{k} = \frac{2^{k}}{f(k)}$ for some positive and sufficiently slowly increasing function $f(k)$, then:
\begin{equation}\label{eq:growth}
|B_{n}(o)| \leq n^{2 + \varepsilon(n)}
\end{equation}
for some nonnegative function $\varepsilon(n) \to 0$ as $n \to \infty$. How slowly $f(k)$ should grow will be determined by the transience requirement.

Consider the graph $S(\alpha)$ and the group $\Gamma(\alpha)$ defined by taking a scaling sequence $\alpha_{k}$ satisfying:
\[
\sum\limits_{k=1}^{\infty} \frac{\alpha_{k}}{2^k} < \infty
\]

\begin{proposition}\label{proposition:transience}
For $\alpha_{k}$ as above the graph $S(\alpha)$ is transient.
\end{proposition}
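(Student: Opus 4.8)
The plan is to exhibit a unit flow of finite energy from the root $o$ and to invoke the flow criterion, Proposition \ref{prop:flow}. The tree-like branching structure of $S(\alpha)$ makes the natural candidate transparent: I would send a current of total strength $1$ out of $o$ and, at every branching cycle, split the incoming current equally between the two cycles of the next level that emanate from it. Concretely, since the number of cycles doubles from one level to the next — there is one cycle at level $1$ and $2^{n-1}$ cycles at level $n$ — this prescription makes each level-$n$ cycle carry a total current of $2^{-(n-1)}$. Within each such cycle the current splits once more between the two paths joining the starting point $b_n$ to the far branching vertex, so every edge lying on a level-$n$ path carries current exactly $2^{-n}$.

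First I would check that this assignment is a genuine unit flow. Kirchhoff's law is immediate along the interior of each path (the current passes straight through), and the only vertices requiring attention are those on the branching triangles. At the vertex where the two paths of a level-$n$ cycle meet, the incoming current $2^{-(n-1)}$ is split evenly along the two triangle edges leading to the two vertices that start the level-$(n+1)$ cycles, each of which then forwards it into a new cycle, while the third triangle edge carries no current; this respects conservation. The self-loops coming from $b$ acting as the identity carry no current and are irrelevant to transience. That the total outflow from $o$ equals $1$ is built into the construction.

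The heart of the argument is the energy estimate. At level $n$ there are $2^{n-1}$ cycles, each of length $2\alpha_n$ (hence $\Theta(\alpha_n)$ path-edges per path), and each path-edge carries current $2^{-n}$, so the energy contributed by the level-$n$ path-edges is of order
\[
2^{n-1}\cdot 2\alpha_n \cdot \left(2^{-n}\right)^2 = \frac{\alpha_n}{2^n}.
\]
The branching triangles at level $n$ number $2^{n-1}$, and no edge in the level-$n$ substructure carries more than the cycle current $2^{-(n-1)}$, so the three edges of each triangle contribute at most $3\cdot 2^{n-1}\cdot 4^{-(n-1)} = 3\cdot 2^{-(n-1)}$, which is summable independently of $\alpha$. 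Summing over levels gives
\[
\mathcal{E}(I) \lesssim \sum_{n\geq 1} \frac{\alpha_n}{2^n} + \sum_{n \geq 1} 2^{-(n-1)} < \infty,
\]
where finiteness of the first series is exactly the hypothesis $\sum_k \alpha_k/2^k < \infty$. Applying Proposition \ref{prop:flow} then yields transience.

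I expect the only real obstacle to be the careful bookkeeping of the flow at the branching cycles — confirming that the current can be routed through the three-vertex triangles while respecting conservation, and that the triangle edges contribute only a geometrically small, $\alpha$-independent amount to the energy. The competition in the main series is between the doubling of the number of branches ($2^{n-1}$) and the quadratic decay of the per-edge current ($4^{-n}$), whose product leaves precisely the factor $\alpha_n/2^n$; making this accounting precise (including the exact number of path-edges after the triangle insertions) is the step that needs the most care, though it is routine.
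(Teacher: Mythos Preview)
Your proof is correct and follows essentially the same approach as the paper: both construct the equal-split unit flow from $o$ through the branching tree of cycles, assign current $2^{-n}$ to each level-$n$ path edge, and verify that the level-$n$ contribution to the energy is of order $\alpha_n/2^n$, which is summable by hypothesis. You are in fact slightly more careful than the paper in accounting separately for the triangle edges and the $b$-self-loops, but the argument is otherwise identical.
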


\begin{proof}
We use the flow criterion from Proposition \ref{prop:flow}. Consider any cycle on the $k$-th level of the graph. If the edge $e$ is on the upper half of the cycle and is labelled by $a$, or is on the lower half of the cycle and is labelled by $a^{-1}$, we take the value of $I(e)$ to be $1/2^{k}$. The two edges labelled by $b$ and $b^{-1}$ adjacent to the rightmost point of the cycle also get the value $1/2^k$ and all other edges have values $0$. One readily checks that this function satisfies Kirchoff's law and its energy is given by:
\[
\mathcal{E}(I) = \frac{1}{2} \sum\limits_{e} I(e)^2 = \sum\limits_{k=1}^{\infty} 2^{k-1} \alpha_{1} \left( \frac{1}{2^k} \right)^2 = \frac{1}{2} \sum\limits_{k=1}^{\infty} \frac{\alpha_{k}}{2^k} 
\]
which is finite by the assumption on the scaling sequence.
\end{proof}

An example of a scaling sequence satisfying this assumption is $\alpha_{k} = \lceil \frac{2^k}{k^2} \rceil$ and from now on we denote by $S$ and $\Gamma$ the graph and the group corresponding to this choice of $\alpha$. One can easily check (by induction) that this scaling sequence satisfies Assumption \ref{eq:scaling-assumption} on exponential-like growth.

The graph $S$ satisfies the volume growth condition $|B_{n}(o)| \leq n^{2 + \varepsilon(n)}$ described above for $\varepsilon(n) \lesssim \frac{\log \log n}{\log n}$ (so that $|B_{n}(o)| \approx n^{2} \log^\delta n$ for some $\delta>0$). We will use the graph $S$ and the group $\Gamma$ to construct a group with the desired behavior of return probabilities.

Consider the permutational wreath product $G = \Z_2 \wr_{S} \Gamma$. Let $Z_n$ be the simple random walk on $\Gamma$ and denote by $\tilde{X}_n = (X_n, Z_n)$ the associated switch-walk-switch random walk on $G$ (with the uniform distribution on the lamp group $\Z_2$).

Denote by $p_{n}(g,h)$ the probability that $\tilde{X}_{n} = h$ given $\tilde{X}_{0} = g$, where $g, h \in G$. To bound the return probability $p_{2n}(e,e)$, for any finite set $A \subseteq G$ we can write, using the symmetry of the random walk and Cauchy-Schwarz inequality:
\[
p_{2n}(e,e) = \sum\limits_{g \in G} p_{n}(e,g) p_{n}(g,e) = \sum\limits_{g \in G} p_{n}(e,g)^2 \geq \sum\limits_{g \in A} p_{n}(e,g)^2 \geq \frac{p_{n}(A)^2}{|A|}
\]
where $p_{n}(A) = \sum\limits_{g \in A} p_{n}(e,g)$ is the probability that $\tilde{X}_{n}$ is in the set $A$ after $n$ steps.

For the usual lamplighter $\Z_{2} \wr \Z^d$ we would take $A$ to be the set of all elements with lamp configurations contained in a ball of radius $n^{\alpha}$ (with $\alpha$ to be optimized later) and lower bound $p_{n}(A)$ by the probability of the simple random walk on $\Z^d$ to be actually confined to a ball of radius $n^{\alpha}$. Since the base group has polynomial growth, the main contribution to $|A|$ comes from the number of lamp configurations, which is of the order of $e^{n^{d \alpha}}$ (as balls in $\Z^d$ have volume growth $\approx n^d$). The probability that the range of a simple random walk on $\Z^d$ is contained in a ball of radius $n^{\alpha}$ can be shown to be of the order of $e^{-n^{1 - 2\alpha}}$. We want these two terms to be of the same order - optimizing for $\alpha$ gives that one should consider balls of radius $n^{\frac{1}{d+2}}$, which gives the correct return probability exponent of $\frac{d}{d+2}$.

We use the same approach for the permutational wreath product $\Z_2 \wr_{S} \Gamma$, the difference being that we are dealing with inverted orbits instead of ordinary random walks and we have to be more careful with estimating the possible positions of the random walker on the base group.

Let $B_{Km}(o)$ be a ball of radius $Km$ around $o$ in $S$ (with $K$ as in Theorem \ref{th:all-x} and $m$ to be optimized later). We will say that a word $w$ has {\it small inverted orbits} if $\Oo(w) \subseteq B_{Km}(o)$. Consider the set $C$ of group elements with the following property: each element of $C$ can be represented by a word $w$ of length $n$ such that $w$ has small inverted orbits and $d(x, x.w) \leq Km$ for every $x \in S$.

Following the same approach as for the ordinary lamplighter group, in the bound above we take $A = \{ (f, \gamma) \in G \, | \,  \supp f \subseteq B_{Km}(o), \, \gamma \in C \}$.

We have to provide a lower bound on $p_{n}(A)$ and an upper bound on the size of $A$.

\begin{theorem}\label{th:ret-prob}
$p_{n}(A) \gtrsim e^{-c \frac{n}{m^2}}$ for all $n \geq 1$.
\end{theorem}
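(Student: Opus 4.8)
The plan is to show that the single event $A_{n,m}$ from Lemma \ref{lm:lazy} already forces $\tilde{X}_n$ into $A$, so that $p_n(A) \geq \pr{A_{n,m}}$ and the bound follows at once. The key observation is that $A_{n,m}$ depends only on the base walk $Z_n$ (through its lazy projection $\hat{Z}_n$), whereas the independent switches $l_i, l_i'$ only decide which lamps inside the support are lit, never enlarging where that support sits.

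First I would record that for a switch-walk-switch walk the lamp configuration is supported on the inverted orbit, $\supp X_n \subseteq \Oo(Z_n)$. This is exactly the content of the observation in Section \ref{section:preliminaries}: the switch $l_i$ lands at the location $o.g_{i-1}^{-1}\cdots g_1^{-1}$ and the switch $l_i'$ at $o.g_i^{-1}\cdots g_1^{-1}$, so every switch acts at one of the inverted-orbit points $o, o.g_1^{-1}, \ldots, o.g_n^{-1}\cdots g_1^{-1}$, and any cancellations only shrink the support.

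Next, assuming $A_{n,m}$ holds for the trajectory $Z_n = g_1 \cdots g_n$, Theorem \ref{th:all-x} applied to $w = g_1 \cdots g_n$ gives $d(x, x.w) \leq Km$ for every $x \in S$, while Corollary \ref{th:estimate} gives $\Oo(Z_n) \subseteq B_{Km}(o)$. The first fact exhibits $w$ as a length-$n$ word witnessing $Z_n \in C$; the second, combined with the support bound, yields $\supp X_n \subseteq B_{Km}(o)$. Hence on $A_{n,m}$ we have $\tilde{X}_n = (X_n, Z_n) \in A$ for every realization of the switches, so $\{A_{n,m}\} \subseteq \{\tilde{X}_n \in A\}$ as events and $p_n(A) \geq \pr{A_{n,m}} \gtrsim e^{-c \frac{n}{m^2}}$ by Lemma \ref{lm:lazy}.

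I expect no serious obstacle here, since the geometric work was already carried out in Section \ref{section:bounds}; the only point needing care is the inclusion $\supp X_n \subseteq \Oo(Z_n)$, namely verifying that both switches per step land at inverted-orbit locations and that overlaps never grow the support. Once this is in place the estimate is an immediate consequence of the large-deviation bound for the lazy walk.
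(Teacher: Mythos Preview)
Your proposal is correct and follows the same route as the paper: on the event $A_{n,m}$ one invokes Theorem~\ref{th:all-x} and Corollary~\ref{th:estimate} to force $\tilde{X}_n\in A$, then applies Lemma~\ref{lm:lazy}. You are in fact slightly more careful than the paper in spelling out the inclusion $\supp X_n\subseteq\Oo(Z_n)$; the only minor slip is that membership in $C$ requires the representing word to have \emph{both} small inverted orbits and the displacement bound, so both of your two facts together (not just the first) witness $Z_n\in C$.
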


\begin{proof}

We have $p_{n}(A) = \pr{\tilde{X}_n \in A} = \pr{\Oo (Z_n) \subseteq B_{Km}(o), Z_n \in C}$. By Lemma \ref{lm:lazy}, Theorem \ref{th:all-x} and Corollary \ref{th:estimate} with probability at least $ e^{-c \frac{n}{m^2}}$ (up to a multiplicative constant) the random element $Z_n$ simultaneously has small inverted orbits, so $\Oo (Z_n) \subseteq B_{Km}(o)$, and does not move any vertex further than $Km$ from itself, which implies that $Z_n \in C$.
\end{proof}

\begin{theorem}\label{th:size-of-a}
$|A| \lesssim e^{c m^{2 + \eta(m)} }$ for some sequence $\eta(m) \to 0$ as $m \to \infty$.
\end{theorem}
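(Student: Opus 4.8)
The plan is to exploit the product structure of $A$. Since $A=\{(f,\gamma): \supp f\subseteq B_{Km}(o),\ \gamma\in C\}$ is a product set, we have $|A|=|\{f:\supp f\subseteq B_{Km}(o)\}|\cdot|C|$, and it suffices to bound the two factors separately. The lamp factor is immediate from the volume growth of $S$: an admissible $f$ assigns an element of $\Z_2$ to each vertex of $B_{Km}(o)$ and $\id$ elsewhere, so there are exactly $2^{|B_{Km}(o)|}$ of them, and applying \eqref{eq:growth} at radius $Km$ gives $|B_{Km}(o)|\le (Km)^{2+\varepsilon(Km)}$. Hence this factor is at most $e^{(\log 2)K^{2+\varepsilon(Km)}m^{2+\varepsilon(Km)}}\le e^{cm^{2+\eta_1(m)}}$ for a sequence $\eta_1(m)\to 0$, the constants $\log 2$ and $K^{2+\varepsilon}$ being swallowed into $c$ and into the lower-order correction.

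The real work is bounding $|C|$, and here I would reduce everything to a rigidity statement: \emph{there is a constant $C'$ (depending only on $K$ and $d$) such that every $\gamma\in C$ is determined by its restriction to $B_{C'm}(o)$}. Granting this, the map $\gamma\mapsto\gamma|_{B_{C'm}(o)}$ embeds $C$ into the set of injections $B_{C'm}(o)\to B_{(C'+K)m}(o)$ — injective since $\gamma$ is a bijection of $S$, and with that codomain since $\gamma$ displaces every vertex by at most $Km$. Therefore
\[
|C|\le |B_{(C'+K)m}(o)|^{\,|B_{C'm}(o)|}\le \exp\!\big(|B_{C'm}(o)|\cdot\log|B_{(C'+K)m}(o)|\big).
\]
Using $|B_{rm}(o)|\le (rm)^{2+\varepsilon(rm)}$ again, the exponent is $\lesssim m^{2+\varepsilon}\log m = m^{\,2+\varepsilon+(\log\log m)/\log m}$, so $|C|\le e^{cm^{2+\eta_2(m)}}$ with $\eta_2(m)\to 0$; the extra $\log m$ produced by $\log|B|$ is exactly what gets absorbed into $\eta_2$. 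Combining the two factors, $|A|\le e^{cm^{2+\eta_1}}\cdot e^{cm^{2+\eta_2}}\le e^{cm^{2+\eta(m)}}$ with $\eta=\max(\eta_1,\eta_2)\to 0$.

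The crux, and the step I expect to be the main obstacle, is the rigidity claim. The idea is that since $\Gamma$ acts on $S$ through a fixed pair of generators while $\gamma\in C$ moves no vertex more than $Km$, the action of $\gamma$ outside $B_{C'm}(o)$ is forced by what happens inside. Concretely, on any branch far from $o$ whose defining bubble has length $\gg Km$, a vertex more than $Km$ from the two branching cycles bounding it can never reach a branching cycle under $\gamma$, so $\gamma$ acts there purely through $a$, i.e.\ as translation by a fixed integer $t$ with $|t|\le Km$. Near each far branching cycle the relevant neighbourhood $B_{Km}(\cdot)$ is, by Assumption \ref{eq:scaling-assumption} (which forces the adjacent bubbles to be long once we are past level $O(\log m)$), isomorphic to one fixed finite rooted graph; as $\gamma$ is a single fixed word displacing by at most $Km$, its restriction to each such neighbourhood is the \emph{same} permutation under these isomorphisms. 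Thus the global shift $t$ together with this single generic local pattern — both already visible inside $B_{C'm}(o)$ once $C'$ is taken larger than, say, twice $K$ times the constants of Theorem \ref{th:all-x} — determine $\gamma$ on all of $S$. Making the homogeneity of the deep action precise (in particular verifying that no ``non-generic'' branching cycle occurs beyond $B_{C'm}(o)$, which is where the exponential growth of the $\alpha_k$ enters) is the technical heart; everything after it is the elementary injection count above. Note finally that even a cruder bound $|C|\le e^{cm^{2+\eta}}$, obtained by counting all bounded-displacement group permutations up to their behaviour near $o$, would suffice, since the lamp factor already contributes at the scale $e^{cm^{2+\eta}}$.
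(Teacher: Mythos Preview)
Your proposal is correct and takes essentially the same approach as the paper: both bound $|C|$ via the observation that the action of a representing word $w$ on a vertex $x$ depends only on the isomorphism type of the labelled ball $B_{Km}(x)$, so an element of $C$ is determined by finitely many local data. The paper makes this explicit by classifying balls into three types (intersecting one, two, or at least three levels of $S$), which is exactly the case analysis underlying your rigidity claim, and its dominant third-type bound $|B_{(c+K)m}(o)|^{|B_{cm}(o)|}\le e^{cm^{2+o(1)}\log m}$ coincides with your injection count.
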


\begin{proof}
The size of $A$ is at most the number of all lamp configurations with support in $B_{Km}(o)$ times the size of $C$. The number of configurations can be bounded above by $2^{|B_{Km}|}$, which by the growth condition \eqref{eq:growth} is at most $e^{c m^{2 + \varepsilon(m)}}$.

To bound the size of $C$, we use the property that words with small inverted orbits admit a concise description. Every element $\gamma \in \Gamma$ can be described by specifying for each vertex its image under the action of $\gamma$. Now suppose $\gamma$ can be represented by a word $w$ with the property that $d(x,x.w) \leq Km$ for every vertex $x$. Since every vertex $x \in S$ is mapped under the action of $w$ into some other vertex from the ball $B_{Km}(x)$, for a fixed vertex $x$ we have at most $|B_{Km}(x)|$ possible choices.

Now, for a fixed $m$ we have only finitely many types of vertices for which we have to specify their images in order to describe $\gamma$ (since the image of a vertex $x$ under $w$ depends only on the isomorphism type of the ball of radius at most $Km$ around $x$). We distinguish three types of vertices: 1) vertices such that $B_{Km}(x)$ intersects only one level in $S$, 2) $B_{Km}(x)$ intersects two levels in $S$, 3) $B_{Km}(x)$ intersects at least three levels in $S$.

For vertices of the first kind, the ball $B_{Km}(x)$ does not intersect any branching cycle, which means that it looks like a ball in $\Z$ and all vertices of this kind are mapped by $\gamma$ in the same way. Thus we have at most $2 K m$ choices for vertices of this kind.

For vertices of the second kind, each of them must be in a ball of radius $Km$ around a branching point which does not intersect any other branching cycle. Such a ball can have at most $6 Km$ vertices and each of them is mapped into a ball of radius at most $2Km$ around a branching point, which can have at most $cm$ vertices (for some $c$). This give us at most $(cm)^{6Km}$ possibilities.

For vertices of the third kind, we observe that if $B_{Km}(x)$ intersects at least three levels and $x$ belongs to the $k$-th level, then at least one of $\alpha_{k-1}$, $\alpha_{k}$, $\alpha_{k+1}$ is smaller than $2Km$. From this and Assumption \ref{eq:scaling-assumption} it follows that $d(o, x) \leq c m$ for some $c >0$ (like in the proof of Theorem \ref{th:all-x}). Thus we have at most $|B_{cm}(o)|$ vertices of this kind. Since $B_{Km}(x) \subseteq B_{(c+K)m}(o)$, we have at most $|B_{(c+K)m}(o)|$ choices for each vertex. As $\varepsilon(m) \to$, this gives us at most $  |B_{(c+K)m}(o)|^{|B_{cm}(o)|} \leq e^{c m^{2 + o(1)}\log m}$ choices for vertices of this kind.

Thus there at most a constant times $2m \cdot (8m)^{cm} \cdot e^{cm^{2 + o(1)}\log m}$ possible choices determining an element $\gamma$ which can be represented by a word which has small inverted orbits. This gives us $|C| \leq e^{c m^{2 + o(1)} \log m}$ and $|A| \leq e^{c m^{2 + o(1)}} \cdot |C| \leq e^{c m^{2 + o(1)} \log m}$, so the theorem holds with $\eta(m) \lesssim \frac{\log \log m}{\log m}$.
\end{proof}

\begin{corollary}\label{cor:ret-prob}
The return probability for the random walk $\tilde{X}_{n}$ on $G = \Z_2 \wr_{S} \Gamma$ satisfies for all $n \geq 1$:
\[
p_{2n}(e,e) \gtrsim e^{-c n^{1/2 + o(1)} }
\]
\end{corollary}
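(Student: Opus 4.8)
The plan is to combine the Cauchy--Schwarz bound $p_{2n}(e,e) \geq p_n(A)^2/|A|$ (established just before the statement) with the two preceding theorems, and then optimize over the single free parameter $m$. First I would substitute the lower bound from Theorem \ref{th:ret-prob} and the upper bound from Theorem \ref{th:size-of-a} directly into the Cauchy--Schwarz inequality, obtaining for every $m \geq 1$ and $n \geq 1$
\[
p_{2n}(e,e) \geq \frac{p_{n}(A)^2}{|A|} \gtrsim \frac{e^{-2c\,n/m^2}}{e^{c\,m^{2+\eta(m)}}} = \exp\left(-c\left(\frac{n}{m^2} + m^{2+\eta(m)}\right)\right),
\]
after absorbing the various multiplicative constants into a single $c$. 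This reduces the problem to minimizing the exponent $\frac{n}{m^2} + m^{2+\eta(m)}$ over $m$.

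Next I would balance the two competing terms. Ignoring the correction $\eta(m)$ for a moment, the terms $\frac{n}{m^2}$ and $m^2$ are of equal order precisely when $m^4 \approx n$, i.e. $m \approx n^{1/4}$, and at that scale each term is of order $n^{1/2}$. Accordingly I would set $m = \lfloor n^{1/4} \rfloor$. With this choice the first term is $\frac{n}{m^2} = n^{1/2 + o(1)}$, while for the second term I would write $m = n^{1/4 + o(1)}$ and use $\eta(m) \to 0$ to get $m^{2 + \eta(m)} = n^{(2+\eta(m))/4 + o(1)} = n^{1/2 + o(1)}$. Summing, $\frac{n}{m^2} + m^{2+\eta(m)} \leq n^{1/2 + o(1)}$, which yields $p_{2n}(e,e) \gtrsim e^{-c\,n^{1/2 + o(1)}}$ as desired.

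The only point requiring genuine care — and it is a mild one — is the composition of the two $o(1)$ terms: since $m$ is itself a function of $n$, I must verify that $\eta(m(n))$ still tends to $0$ as $n \to \infty$ rather than merely as $m \to \infty$. This is immediate because $m(n) = \lfloor n^{1/4}\rfloor \to \infty$, so $\eta(m(n)) \to 0$ by the conclusion of Theorem \ref{th:size-of-a}, and the resulting exponent is genuinely $n^{1/2 + o(1)}$. I do not expect any real obstacle here: the substantive analytic work (the large-deviation lower bound on $p_n(A)$ and the counting estimate $|A| \lesssim e^{c m^{2+\eta(m)}}$) has already been done in Theorems \ref{th:ret-prob} and \ref{th:size-of-a}, so this corollary is essentially the optimization that matches the confinement cost $n/m^2$ against the configuration-entropy cost $m^{2+\eta(m)}$, exactly mirroring the $\alpha = \tfrac{1}{d+2}$ optimization for $\Z_2 \wr \Z^d$ in the heuristic discussion, here with effective dimension $d = 2 + o(1)$.
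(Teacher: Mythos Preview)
Your proposal is correct and follows essentially the same approach as the paper: combine Theorems \ref{th:ret-prob} and \ref{th:size-of-a} via the Cauchy--Schwarz bound and then balance $n/m^2$ against $m^{2+\eta(m)}$. The only cosmetic difference is that the paper chooses $m$ by solving the implicit equation $n/m^2 = m^{2+\eta(m)}$ (giving $m = n^{1/4 - \varepsilon'(n)}$), whereas you simply set $m = \lfloor n^{1/4}\rfloor$; both choices yield the same $n^{1/2+o(1)}$ exponent, and your explicit check that $\eta(m(n))\to 0$ is exactly the care required.
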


\begin{proof}

By combining Theorem \ref{th:ret-prob} and Theorem \ref{th:size-of-a} we obtain the bound:
\[
p_{2n}(e,e) \gtrsim e^{-c m^{2 + \eta(m)}} e^{-c \frac{n}{m^2}}
\]
To make this bound optimal we want both terms on the right hand side to be of the same order, which corresponds to taking $m$ such that $\frac{n}{m^2} = m^{2 + \eta(m)}$. This means that $m = n^{1/4 - \varepsilon'(n)}$ for some $\varepsilon'(n) \geq 0$, $\varepsilon'(n) \to 0$. Inserting this back into the lower bound gives us:
\[
p_{2n}(e,e) \gtrsim e^{-c n^{1/2 + f(n)} }
\]
with $f(n) \lesssim \frac{\log \log n}{\log n} = o(1) $ as $n \to \infty$.
\end{proof}

\begin{remark}
One can do a similar calculation for a more general scaling sequence satisfying $\alpha_{n} = \alpha^{n + o(n)}$, with $\alpha > 1$, which then gives:
\[
|A| \lesssim e^{c m^{d + o(1)} }
\]
and
\[
p_{2n}(e,e) \gtrsim e^{-c n^{\frac{d}{d+2}} }
\]
with $d = 1 + \frac{\log 2}{\log \alpha} + o(1)$ as $m \to \infty$.
\end{remark}

We can now prove the main theorem:

\begin{proof}[Proof of Theorem \ref{th:main}]
Take $G = \Z_2 \wr_{S} \Gamma$ for $S$ and $\Gamma$ as above. By Corollary \ref{cor:ret-prob} the return probability for the switch-walk-switch random walk $\mu$ on $G$, induced from the simple random walk on $\Gamma$ and a uniform distribution on $\Z_2$, satisfies:
\[
p_{2n}(e,e) \gtrsim e^{-c n^{1/2 + o(1)} }
\]
which gives the return probability exponent $\overline{\gamma} \leq 1/2$. The induced random walk on $S$ is the simple random walk, which by Proposition \ref{proposition:transience} is transient, so by Theorem \ref{th:liouville} the group $G$ supports nonconstant bounded harmonic functions. Thus $G$ has both $\overline{\gamma} \leq 1/2$ and the non-Liouville property.
\end{proof}

\section*{Acknowledgements}\label{acknowledgements}

We would like to thank Gady Kozma for bringing our attention to groups permuting vertices of slowly growing trees. We would also like to thank anonymous referees for their valuable comments.

\bibliography{bibliography}{}

\providecommand{\MR}[1]{}
\providecommand{\bysame}{\leavevmode\hbox to3em{\hrulefill}\thinspace}
\providecommand{\MR}{\relax\ifhmode\unskip\space\fi MR }
\providecommand{\MRhref}[2]{%
  \href{http://www.ams.org/mathscinet-getitem?mr=#1}{#2}
}
\providecommand{\href}[2]{#2}
\begin{thebibliography}{PSC02}

\bibitem[AK]{gady-gidi}
Gideon Amir and Gady Kozma, \emph{{Minimal harmonic functions III: the
  sublogarithmic regime}}, in preparation.

\bibitem[Ale92]{alexopoulos}
G.~Alexopoulos, \emph{A lower estimate for central probabilities on polycyclic
  groups}, Canad. J. Math. \textbf{44} (1992), no.~5, 897--910. \MR{1186471
  (93j:60007)}

\bibitem[AV12]{balint-gidi}
Gideon Amir and B{\'a}lint Vir{\'a}g, \emph{{Speed exponents of random walks on
  groups}}, arXiv:1203.6226v3, math.PR.

\bibitem[AV14]{balint-gidi-2}
\bysame, \emph{Positive speed for high-degree automaton groups}, Groups Geom.
  Dyn. \textbf{8} (2014), no.~1, 23--38. \MR{3209701}

\bibitem[BE11]{erschler-bartholdi}
Laurent Bartholdi and Anna Erschler, \emph{{Poisson-Furstenberg boundary and
  growth of groups}}, arXiv:1107.5499v1, math.GR.

\bibitem[BE12]{erschler-bartholdi-growth}
\bysame, \emph{Growth of permutational extensions}, Invent. Math. \textbf{189}
  (2012), no.~2, 431--455. \MR{2947548}

\bibitem[BE14]{erschler-bartholdi-intermediate}
\bysame, \emph{{Imbeddings into groups of intermediate growth}},
  arXiv:1403.5584v2, math.GR.

\bibitem[Bri13]{brieussel}
J{\'e}r{\'e}mie Brieussel, \emph{Behaviors of entropy on finitely generated
  groups}, Ann. Probab. \textbf{41} (2013), no.~6, 4116--4161. \MR{3161471}

\bibitem[Gou14]{gournay}
Antoine Gournay, \emph{{The Liouville property via Hilbertian compression}},
  arXiv:1403.1195v4, math.GR.

\bibitem[LP13]{lee-peres}
James~R. Lee and Yuval Peres, \emph{Harmonic maps on amenable groups and a
  diffusive lower bound for random walks}, Ann. Probab. \textbf{41} (2013),
  no.~5, 3392--3419. \MR{3127886}

\bibitem[LP14]{peres-lyons}
Russell Lyons and Yuval Peres, \emph{Probability on trees and networks},
  \url{http://http://mypage.iu.edu/~rdlyons/prbtree/prbtree.html}, 2014.

\bibitem[Pet14]{gabor}
G{\'a}bor Pete, \emph{Probability and geometry on groups},
  \url{http://www.math.bme.hu/~gabor/PGG.pdf}, 2014.

\bibitem[PSC00]{pittet-saloff-coste}
Ch. Pittet and L.~Saloff-Coste, \emph{On the stability of the behavior of
  random walks on groups}, J. Geom. Anal. \textbf{10} (2000), no.~4, 713--737.
  \MR{1817783 (2002m:60012)}

\bibitem[PSC02]{saloff-coste}
C.~Pittet and L.~Saloff-Coste, \emph{On random walks on wreath products}, Ann.
  Probab. \textbf{30} (2002), no.~2, 948--977. \MR{1905862 (2003d:60013)}

\bibitem[SCZ]{laurent-tianyi}
Laurent Saloff-Coste and Tianyi Zheng, \emph{{Random walks and isoperimetric
  profiles under moment conditions}}, in preparation.

\bibitem[Var91]{varopoulos}
N.~Th. Varopoulos, \emph{Groups of superpolynomial growth}, Harmonic analysis
  ({S}endai, 1990), ICM-90 Satell. Conf. Proc., Springer, Tokyo, 1991,
  pp.~194--200. \MR{1261441 (94m:58217)}

\end{thebibliography}
\bibliographystyle{amsalpha}

\bigskip
\noindent
 Michał Kotowski \\
 Department of Mathematics, University of Toronto \\
 Bahen Centre ,40 St. George St., Toronto, Ontario \\
 CANADA M5S 2E4
 \\
\noindent
{E-mail:} {\tt michal@math.toronto.edu} \\
\noindent
\url{http://www.math.toronto.edu/~michal/}

\bigskip
\noindent
 B\'{a}lint Virag \\
 Department of Mathematics, University of Toronto \\
 Bahen Centre ,40 St. George St., Toronto, Ontario \\
 CANADA M5S 2E4
 \\
\noindent
{E-mail:} {\tt balint@math.toronto.edu} \\
\noindent
\url{http://www.math.toronto.edu/~balint/}

\end{document}